\documentclass[a4paper,11pt]{amsart}

\usepackage{latexsym}
\usepackage{a4wide}
\usepackage{amscd}
\usepackage{graphics}
\usepackage{amsmath}
\usepackage{amssymb}
\usepackage{mathrsfs}

\newcommand{\R}{\mathbb R}
\newcommand{\C}{\mathbb C}

\newcommand{\Z}{\mathbb Z}

\newcommand{\sphere}[1]{\mathbb{S}^{#1}}
\newcommand{\e}{\mathrm e}
\newcommand{\abs}[1]{\left\vert #1 \right\vert}

\newcommand{\dom}{\textit{D}^{1,2}(\R^{N})}

\newtheorem{thm}{\bf Theorem}[section]      
\newtheorem{lem}[thm]{\bf Lemma}            
\newtheorem{prop}[thm]{\bf  Proposition}     
\newtheorem{defn}[thm]{\bf Definition}      
\newtheorem{rem}[thm]{\bf Remark}       


\begin{document}

\title[Biradial solutions to elliptic equations]{A note on the complete rotational invariance of biradial solutions to semilinear elliptic equations}
\author{L. Abatangelo} \author{S. Terracini}

\address{L. Abatangelo and S. Terracini: Dipartimento di Matematica e Applicazioni, Universit\`a di Milano Bicocca, Piazza Ateneo Nuovo, 1, 20126 Milano (Italy)}
\email{l.abatangelo@campus.unimib.it, susanna.terracini@unimib.it}

\subjclass[2000]{35J75, 35B06, 35B50, 35B51}


\date{\today}
\maketitle

\begin{abstract}
\noindent We investigate symmetry properties of solutions to
equations of the form
\[-\Delta u=\frac{a}{\abs{x}^2}u+f(\abs{x},u)\]
in $\R^N$ for $N\geq4$, with at most critical nonlinearities. By
using geometric arguments, we prove that solutions with low Morse
index (namely 0 or 1) and which are biradial (i.e. are invariant under the action of a toric group of rotations), are in fact completely radial. A similar result holds for the semilinear Laplace-Beltrami equations on the sphere. Furthermore, we show that the condition on the Morse index is sharp. Finally we apply the result in order to estimate best constants of Sobolev type inequalities with different symmetry constraints.
\end{abstract}

\section{Introduction and statement of the result}\label{sec:intro}
Let $x=(\xi,\zeta)\in\R^k\times\R^{N-k}$, with $k,N-k\geq 2$. A function $u\colon\R^N\to\R$ is termed \emph{biradial}
if it is invariant under the action of the subgroup $SO(k)\times SO(N-k)$ of the group of rotations, namely, if there exists $\varphi\colon \R^+\times\R^+\to\R$ such that $u(\xi,\zeta)=\varphi(|\xi|,|\zeta|)$.
Consider the equation
\begin{equation}\label{eq:main}
-\Delta u=\frac{a}{\abs{x}^2}u+f(\abs{x},u) \qquad \textrm{in $\R^N\setminus\{0\}$,}
\end{equation}
in this paper, we wonder under what circumstances it is possible to assert that a biradial solution to \eqref{eq:main} is actually radially symmetric.

This problem arises from \cite{Ter96}, where the following
symmetry breaking result is given for the critical nonlinearity
$f(\abs{x},u)=u^{(N+2)/(N-2)}$: if $a<0$ and $\abs{a}$ is sufficiently large,
there are at least two distinct positive solutions, one being
radially symmetric and the second not. These solution are obtained by
 minimization of the associated Rayleigh
quotient over functions possessing either the full radial symmetry or a
discrete group of symmetries, namely, for given $k\in\Z$, functions which are invariant under the
$\Z_k\times SO(N-2)$-action on $\dom$ given by
$$u(\xi,\zeta) \mapsto v(\xi,\zeta)=u\big(R\xi,T\zeta\big)\ ,$$
$T$ being any rotation of $\R^{N-2}$ and $R$ a fixed rotation of order $k$. Once proved that the infimum
taken over the $\Z_k\times SO(N-2)$-invariant functions is achieved,
by comparing its value with the infimum taken over the radial
functions, one deduces the occurrence of symmetry breaking (see also \cite{ATer10}).

In order to obtain multiplicity of solutions, the first attempt is to increase the order $k$ of the symmetry group and, eventually,
to let it diverge to infinity, finding in the limit a minimizer of the Rayleigh quotient over the biradial functions. Now, will all these  solutions be distinct and different from the radial one? When examining this question, we need to take into account the construction due to Ding  of an infinity of nontrivial biradial  solutions to the Lane--Emden equation with  critical nonlinearity (cfr \cite{D86}). In that case it is well known that there is a unique family of radially symmetric solutions, which are the global minimizers of the Rayleigh quotients, while in Ding's construction the nontrivial biradial solutions have a Morse index larger than $2$.

We recall the following definition:

\begin{defn}
 The (plain, radial, biradial) \emph{Morse index} of a solution $u$ is the dimension
of the maximal subspace of the space of (all, radial, biradial) functions of  $\mathcal C_0^\infty(\R^N\setminus\{0\})$ on which the quadratic form associated to the linearized equation at $u$  is negative definite.
\end{defn}

We stress it is rather a geometric definition, so it is independent from any spectral theory about the differential operator we are dealing with.

The recent literature indicates that, for general semilinear equations,
solutions having low Morse index do likely possess extra symmetries.
Following these ideas and questions, we investigated in particular
the biradial solutions with a low Morse index, and we are able to prove the
following

\begin{thm}\label{doublyradialth}
Let $u\in\dom$ be a biradial solution to
\begin{equation}\label{eqdr}
 -\Delta u=\frac{a}{\abs{x}^2}u+f(\abs{x},u)
\end{equation}
with $a>-\big(\frac{N-2}{2}\big)^{2}$ and $f:\R^N\times\R\rightarrow\R$ being a Carath\'{e}odory function, $C^1$ with respect to $z$, such that it satisfies the growth restriction
$$\abs{f'_y(\abs{x},y)} \leq C(1+\abs{y}^{2^*-2})$$
for a.e. $x\in\R^N$ and for all $y\in\C$.

If the solution $u$ has biradial Morse index $m(u)\leq1$, then $u$ is radially symmetric.
\end{thm}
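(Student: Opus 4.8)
The plan is to pass to coordinates adapted to the biradial symmetry, extract from $u$ a biradial ``angular derivative'' $h$ solving a perturbed linearized equation, and show that, unless $h\equiv0$ (i.e. unless $u$ is radial), $h$ produces a two-dimensional subspace of biradial test functions on which the linearized quadratic form is negative definite, contradicting $m(u)\le1$.

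Write $r=\abs{x}$ and let $\psi\in(0,\pi/2)$ be defined by $\abs{\xi}=r\cos\psi$, $\abs{\zeta}=r\sin\psi$, so that a biradial $u$ is a function $\varphi(r,\psi)$, equation \eqref{eqdr} reads
\[-\varphi_{rr}-\tfrac{N-1}{r}\varphi_r-\tfrac1{r^2}\big(\varphi_{\psi\psi}+\tfrac{J'}{J}\varphi_\psi\big)=\tfrac{a}{r^2}\varphi+f(r,\varphi),\qquad J(\psi):=(\cos\psi)^{k-1}(\sin\psi)^{N-k-1},\]
with the Neumann-type behaviour $J\varphi_\psi\to0$ at $\psi\in\{0,\pi/2\}$, and the biradial quadratic form at $u$ is
\[\tilde Q(v)=\int_0^\infty\!\!\int_0^{\pi/2}\Big(v_r^2+\tfrac1{r^2}v_\psi^2-\big(\tfrac{a}{r^2}+f'_y(r,\varphi)\big)v^2\Big)\,r^{N-1}J(\psi)\,d\psi\,dr .\]
First I would record the regularity needed: by the growth bound on $f'_y$ and elliptic estimates, $u$ is $C^1$ away from $0$ and lies in the weighted class that makes the following manipulations licit; in particular $w:=\big(\xi_1\partial_{\zeta_1}-\zeta_1\partial_{\xi_1}\big)u=\frac{\xi_1\zeta_1}{\abs\xi\abs\zeta}\,\varphi_\psi$ belongs to $\dom$. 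Set $h:=\varphi_\psi$; since $h\equiv0$ exactly when $u$ is radial, I assume $h\not\equiv0$ and aim for a contradiction.

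The key computation is that $h$ solves the biradial linearized equation \emph{with an extra strictly negative potential}. Differentiating the reduced equation in $\psi$ and using $\big(\tfrac{J'}{J}\big)'=-(k-1)\sec^2\psi-(N-k-1)\csc^2\psi=:c(\psi)$, which is $<0$ on $(0,\pi/2)$ precisely because $k,N-k\ge2$ (concavity of $\log J$), one obtains
\[-h_{rr}-\tfrac{N-1}{r}h_r-\tfrac1{r^2}\big(h_{\psi\psi}+\tfrac{J'}{J}h_\psi\big)=\Big(\tfrac{a+c(\psi)}{r^2}+f'_y(r,\varphi)\Big)h ,\qquad h\to0\ \text{at }\psi\in\{0,\pi/2\};\]
equivalently $w$ is a solution of the \emph{full} linearized equation (mixed rotations commute with \eqref{eqdr}) sitting in the first nontrivial angular mode. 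Testing this equation against $h^{+}=\max(h,0)$ and against $h^{-}$, the ``$f'_y$'' and ``$a/r^2$'' contributions cancel and there remains
\[\tilde Q(h^{\pm})=\int_0^\infty\!\!\int_0^{\pi/2}\frac{c(\psi)}{r^2}\,(h^{\pm})^2\,r^{N-1}J\,d\psi\,dr\;<\;0\quad\text{if }h^{\pm}\not\equiv0 .\]
So if $h$ changes sign, $h^{+}$ and $h^{-}$ are nonzero biradial functions with disjoint supports and $\tilde Q<0$; hence $\tilde Q$ is negative definite on $\mathrm{span}(h^{+},h^{-})$, and after a routine approximation by biradial functions of $\mathcal C_0^\infty(\R^N\setminus\{0\})$ this forces $m(u)\ge2$, a contradiction.

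There remains the one-signed case, which I expect to be the main obstacle. If $h$ does not change sign then, by the strong maximum principle for the displayed linear equation, $h>0$ on all of $(0,\infty)\times(0,\pi/2)$, so its nodal set is empty and one cannot split $h$ as above; instead I would split it \emph{across scales}. For a radial cutoff $\eta=\eta(r)$ the same computation gives $\tilde Q(h\eta)=\int\!\!\int\big(\tfrac{c(\psi)}{r^2}\eta^2+\eta_r^2\big)h^2\,r^{N-1}J$, where the (negative) gain carries the Hardy weight $r^{-2}$ while the cutoff cost $\eta_r^2$ does not. Taking $\eta_1$ a logarithmic cutoff concentrated near a small scale $r\sim\delta$ and $\eta_2$ one concentrated near a large scale $r\sim\delta^{-1}$, with disjoint supports, the hypothesis $a>-\big(\tfrac{N-2}{2}\big)^2$ — which pins down the Frobenius exponents of $u$, hence of $h$, at $0$ and $\infty$, so that $h$ has mass at every scale and the cost/gain ratio is $O(\abs{\log\delta}^{-2})$ — makes $\tilde Q(h\eta_1)<0$ and $\tilde Q(h\eta_2)<0$ for $\delta$ small. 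Then $h\eta_1,h\eta_2$ span a $2$-dimensional negative-definite subspace of biradial functions, again contradicting $m(u)\le1$. Hence $h\equiv0$ and $u$ is radial. The genuinely delicate parts are thus (i) the weighted regularity needed to treat $w$, $h^{\pm}$, $h\eta$ as admissible and to differentiate the equation in $\psi$, and (ii) the scale-splitting estimate in the one-signed case, where the sharp constant in the Hardy inequality is exactly what is used.
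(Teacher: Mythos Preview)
Your reduction to the biradial angular derivative $h=\varphi_\psi$ and the equation it satisfies is precisely the paper's construction in $(r,\psi)$ coordinates: your $h$ is the paper's $-w=r_1\partial_{r_2}u-r_2\partial_{r_1}u$, your extra potential $-c(\psi)/r^2$ is the paper's $\tfrac1{r_1^2}+\tfrac1{r_2^2}$, and the argument that $h^+,h^-$ (after a logarithmic cutoff) each make the biradial quadratic form negative is the content of the paper's Lemma on $\eta w^\pm$. Up to this point the two proofs coincide.

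The gap is in the one-signed case. Your scale-splitting requires, for radially disjoint cutoffs $\eta_1,\eta_2$, that the gain $\int\tfrac{|c|}{r^2}\eta_i^2h^2$ beat the cost $\int(\partial_r\eta_i)^2h^2$ on \emph{each} piece. But once you separate the supports by a log-width $L$ transition and push one piece toward $r\to\infty$, the gain on that piece is $\int_{Re^{L}}^\infty\tfrac{|c|}{r^2}h^2$; by the very Frobenius asymptotics you invoke, $h\sim r^{\delta}\Psi(\psi)$ with $2\delta+N-2<0$, so this gain decays like $(Re^{L})^{2\delta+N-2}$, i.e.\ \emph{exponentially} in $L$, while a logarithmic cutoff only makes the cost small like $L^{-2}$ times the transition integral. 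For large $L$ the cost/gain ratio on the outer piece blows up rather than tending to $0$, and for bounded $L$ the supports cannot be separated with uniformly small cost. Nothing in the hypothesis $a>-\big(\tfrac{N-2}{2}\big)^2$ forces the decay exponents to be small enough for any choice of scales to succeed. The argument thus yields $\tilde Q(h\eta)<0$ for one cutoff only, hence $m(u)\ge1$, which is the hypothesis, not a contradiction.

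The paper disposes of the one-signed case by a completely different, geometric idea that you are missing. Once $w>0$, one fixes a point $\bar x$ and chooses a one-parameter group $\exp\big(t(\cos\gamma\,X_2+\sin\gamma\,X_3)\big)\subset SO(4)$ mixing the two blocks, with $\gamma$ selected so that the angular phase $\theta_1+\theta_2$ is stationary along the orbit through $\bar x$; then $\dot u=-w\sin(\theta_1+\theta_2-\gamma)\equiv -w<0$ along that orbit, which is a closed circle, and strict monotonicity of $u$ on a closed loop is the contradiction. This flow argument avoids any quantitative cost/gain comparison and is what should replace your scale-splitting.
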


An analogous  result also holds for bounded domains having rotational symmetry,
and for elliptic equations on the sphere. The following result holds in any dimension $N\geq 3$:

\begin{thm}\label{doublyradialthmsphere}
 Let $f\in\mathcal C^1(\R;\R)$: if $u\in\mathcal C^2(\sphere{N})$ is a biradial solution to
 $$-\Delta_{\sphere{N}}u=f(u)$$
with $N\geq 3$, and it has biradial Morse index $m(v)\leq1$, then $u$ is constant on the sphere $\sphere{N}$.
\end{thm}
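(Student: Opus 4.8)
The plan is to run, on $\sphere{N}$, the geometric scheme already used for Theorem~\ref{doublyradialth}, which here simplifies because there is no Hardy term. Write $\sphere{N}\subset\R^k\times\R^{N+1-k}$ with $2\le k\le N-1$, use latitude coordinates $x=(\sin t\,\omega_1,\cos t\,\omega_2)$ with $t\in[0,\tfrac{\pi}{2}]$, $\omega_1\in\sphere{k-1}$, $\omega_2\in\sphere{N-k}$, and set $p(t):=\sin^{k-1}t\,\cos^{N-k}t$. I would first record the standard facts: a biradial $u$ is of the form $u=\varphi(t)$; elliptic regularity makes $\varphi$ smooth enough on $[0,\tfrac{\pi}{2}]$, and smoothness of $u$ across the two degenerate fibres $\{t=0\}$ and $\{t=\tfrac{\pi}{2}\}$ forces $\varphi'(0)=\varphi'(\tfrac{\pi}{2})=0$; the biradial quadratic form of the linearization reduces, up to a positive factor, to $Q(v)=\int_0^{\pi/2}\big[(v')^2-f'(\varphi)v^2\big]p\,dt$ on functions of $t$. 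If $u$ is constant on an open set then $u\equiv\mathrm{const}$ by strong unique continuation (write $v=u-\mathrm{const}$, which solves $-\Delta_{\sphere{N}}v=g\,v$ with $g\in L^\infty$), so I may and will assume $\varphi'$ vanishes identically on no subinterval.

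The key point is the production of solutions of the linearized equation by differentiating $-\Delta_{\sphere{N}}u=f(u)$ along the one-parameter groups of rotations of $\sphere{N}$ that lie \emph{outside} $SO(k)\times SO(N+1-k)$. For unit vectors $e\in\R^k$, $e'\in\R^{N+1-k}$ the associated Killing field $X$ satisfies $Xu=-\varphi'(t)\,\langle e,\omega_1\rangle\langle e',\omega_2\rangle$, so $w:=Xu$ is a product $\varphi'(t)\,Y_1(\omega_1)Y_2(\omega_2)$ of the radial derivative with first spherical harmonics on the two factor spheres, it lies in $H^1(\sphere{N})$, and it solves $-\Delta_{\sphere{N}}w=f'(u)w$; hence its full quadratic form vanishes, $\int_{\sphere{N}}|\nabla w|^2=\int_{\sphere{N}}f'(u)w^2$. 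Integrating along the fibres and using $\int|\nabla_{\omega_1}Y_1|^2=(k-1)\int Y_1^2$, $\int|\nabla_{\omega_2}Y_2|^2=(N-k)\int Y_2^2$, this becomes
\[\int_0^{\pi/2}\big[(\varphi'')^2+c(t)(\varphi')^2-f'(\varphi)(\varphi')^2\big]p\,dt=0,\qquad c(t):=\frac{k-1}{\sin^{2}t}+\frac{N-k}{\cos^{2}t}>0,\]
so $Q(\varphi')=-\int_0^{\pi/2}c(t)(\varphi')^2p\,dt<0$ as soon as $\varphi'\not\equiv0$: a non-constant biradial solution already has biradial Morse index $\ge 1$, and I want to push this to $2$.

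To do so I would truncate $w$. Let $I\subset(0,\tfrac{\pi}{2})$ be any interval whose endpoints are zeros of $\varphi'$ or one of the poles $0,\tfrac{\pi}{2}$, and let $\chi_I$ be its indicator. Then $\chi_I w\in H^1(\sphere{N})$, and — crucially — $\nabla_{\sphere{N}}(\chi_I w)=\chi_I\nabla_{\sphere{N}}w$ pointwise a.e., because at each cut either $w$ vanishes (there $\varphi'=0$) or $\chi_I$ does not jump. Testing the equation for $w$ against $\chi_I w$ therefore gives $\int_{\sphere{N}}|\nabla(\chi_I w)|^2=\int_{\sphere{N}}\chi_I|\nabla w|^2=\int_{\sphere{N}}f'(u)\chi_I w^2$, i.e. the full quadratic form of $\chi_I w$ vanishes as well. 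Running the fibre integration on $\chi_I w$ as above then yields $\int_I\big[(\varphi'')^2-f'(\varphi)(\varphi')^2\big]p\,dt=-\int_I c(\varphi')^2p\,dt$, that is $Q(\chi_I\varphi')=-\int_I c(\varphi')^2p\,dt<0$, and $\chi_I\varphi'$ is an admissible biradial test function (continuous, since $\varphi'$ vanishes at the interior endpoints of $I$ and $p$ vanishes at the poles). Consequently, if $\varphi'$ has a zero $t_0\in(0,\tfrac{\pi}{2})$, the two functions $\chi_{(0,t_0)}\varphi'$ and $\chi_{(t_0,\pi/2)}\varphi'$ are nontrivial, have disjoint supports (so they are $Q$-orthogonal) and each carries negative energy; hence $Q$ is negative definite on a $2$-dimensional space of biradial functions, contradicting $m(u)\le1$.

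The real obstacle is the remaining case, in which $\varphi'$ never vanishes in $(0,\tfrac{\pi}{2})$, i.e. $\varphi$ is strictly monotone in the latitude: there the truncation produces only one negative direction, so a genuinely new ingredient is needed to extract a second. I would try a Sturm–Wronskian comparison between $\varphi'$ and the second biradial eigenfunction of the linearized operator — exploiting that this eigenfunction changes sign exactly once, that $p$ degenerates at both poles, and the coercivity supplied by $c>0$ — or, if a sign condition on $u$ is available, a moving-plane reflection in $t$; either way the goal is a second negative biradial direction and hence a contradiction with $m(u)\le1$. Once this monotone case is excluded, $\varphi'\equiv0$ and $u$ is constant on $\sphere{N}$, as claimed. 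I expect this last step to be by far the delicate part, the earlier steps being essentially bookkeeping plus the rotational-derivative mechanism.
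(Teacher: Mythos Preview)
Your reduction to the latitude variable $t$, the production of Killing-field derivatives $Xu=\varphi'(t)\,Y_1(\omega_1)Y_2(\omega_2)$, and the extraction of the strictly negative biradial energy $Q(\chi_I\varphi')=-\int_I c\,(\varphi')^2\,p\,dt$ are all correct and mirror the paper's mechanism: your $\varphi'$ is exactly the paper's biradial factor $w$ (Proposition~\ref{eqw}), and $c(t)=(k-1)/\sin^2t+(N-k)/\cos^2t$ is the sphere analogue of the extra term $1/r_1^2+1/r_2^2$ there. Your two disjointly supported truncations in the sign-changing case play the same role as the paper's $w^+$ and $w^-$ in Lemma~\ref{w+w-} (on the compact sphere no cut-off $\eta$ is needed).

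The gap is exactly where you locate it: the monotone case $\varphi'>0$ (or $<0$) on $(0,\tfrac{\pi}{2})$. The paper does \emph{not} seek a second negative biradial direction here, and neither a Sturm--Wronskian comparison nor a moving-plane argument is used (the latter would in any case require a sign condition you do not have). Instead the paper closes this case by a purely geometric argument carried over verbatim from the Morse-index-$1$ step of the $\R^4$ proof. Once $w=\varphi'$ has a fixed sign, pick a point $\overline{x}$ with angles $\overline{\theta_1},\overline{\theta_2}$ and set $\gamma=\overline{\theta_1}+\overline{\theta_2}-\tfrac{\pi}{2}$. Along the flow of the Killing field $\cos\gamma\,X_2+\sin\gamma\,X_3$ one computes $\nabla(\theta_1+\theta_2)\cdot(\cos\gamma\,X_2+\sin\gamma\,X_3)=0$, so the angular factor $\sin(\theta_1+\theta_2-\gamma)$ stays equal to $1$ along the orbit and hence $\dot u=-w<0$ strictly. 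But this flow is a one-parameter group of rotations of $\R^{N+1}$, so the orbit through $\overline{x}$ is a closed circle on $\sphere{N}$; traversing it once brings you back to $\overline{x}$ with a strictly smaller value of $u$, a contradiction. This closed-orbit monotonicity argument is the missing idea; once you insert it in place of your tentative Sturm/moving-plane suggestions, the proof is complete.
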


The paper is organized as follows: the next section is devoted to
introduce the main tools and facts which will play a key role
within the proof; in section 3 we present the proofs of Theorems
\ref{doublyradialth} and \ref{doublyradialthmsphere} splitting it according to solutions' Morse
index. In section 4 we give applications to the estimate of the best constants in some Sobolev type embeddings with 
symmetries. Finally section 5 is devoted to the discussion of the sharpness of the Theorems with respect to the Morse index.

\section{Preliminaries}

Here we start the proof of Theorem \ref{doublyradialth}. For the sake of simplicity, we will work in dimension $N=4$. We devote
the last part of the proof to discuss the validity of the result in higher
dimensions.


Let us consider the following three orthogonal vector fields in $\R^4$:
\begin{eqnarray*}
 X_1= \left[
 \begin{array}{l}
 \phantom{-}x_2 \\
 -x_1 \\
 \phantom{-}x_4 \\
 -x_3
 \end{array}
 \right],\quad
X_2=\left[
 \begin{array}{l}
 \phantom{-}x_4 \\
 \phantom{-}x_3 \\
 -x_2 \\
 -x_1
 \end{array}
 \right],\quad
X_3=\left[
 \begin{array}{l}
 -x_3 \\
 \phantom{-}x_4 \\
 \phantom{-}x_1 \\
 -x_2
 \end{array}
 \right]
\end{eqnarray*}

 The related derivatives
\[
 w_i = \nabla u \cdot X_i\;,\qquad i=1,2,3
\]
represent the infinitesimal variations of the function $u$
along the flows of the vector fields $X_i$
respectively. As the equation is invariant under the action of such flows,
these directional derivatives are solutions to the linearized equation
\begin{equation}\label{linearizedeq}
 -\Delta w-\frac{a}{\abs{x}^2}w=f'_y(\abs{x},u)w\ .
\end{equation}
We can associate the  singular differential operator
\begin{equation}\label{linearizedop}
 L_u w=-\Delta w-\frac{a}{\abs{x}^2}w-f'_y(\abs{x},u)w\ .
\end{equation}

\begin{rem}\label{rem1}
The vector space of $\{X_1,X_2,X_3\}$ generates the whole group of
infinitesimal rotations on the sphere of $\R^4$,
which can be structured as a 3-dimensional manifold.
In order to prove Theorem \ref{doublyradialth} it will be sufficient
to show that every $w_i\equiv0$.
\end{rem}

Obviously, we have $w_1\equiv0$ because the vector field $X_1$
generates the rotations under which the function $u$ is invariant for.  Let us fix polar coordinates
\begin{equation}\label{polarcoordinates}
\begin{cases}
x_1=r_1\cos\theta_1 \\
x_2=r_1\sin\theta_1
\end{cases}\qquad\qquad
\begin{cases}
x_3=r_2\cos\theta_2  \\
x_4=r_2\sin\theta_2\;;
\end{cases}
\end{equation}
we have $r_1=\sqrt{{x_1}^2 + {x_2}^2}$, $r_2=\sqrt{{x_3}^2 + {x_4}^2}$ and
$\theta_1=\arctan\frac{x_2}{x_1}$, $\theta_2=\arctan\frac{x_4}{x_3}$.

Therefore, since $u$  is biradial, we have
\[
w_i=\nabla u\cdot X_i=w(r_1,r_2)z_i(\theta_1,\theta_2)\;, \qquad i=2,3\;,
\]

where

\[
w(r_1,r_2)=\frac{\partial u}{\partial r_1}r_2-\frac{\partial u}{\partial
r_2}r_1,\qquad
z_2 = \sin(\theta_1+\theta_2),\qquad
z_3 = -\cos(\theta_1+\theta_2).
\]

\begin{rem}\label{rem2}
According to Remark \ref{rem1}, to our aim it will be sufficient
to prove that $w\equiv0$.
\end{rem}

We now focus our attention on a few fundamental properties of
the functions $w_i$. At first, as the $z_i$'s are spherical harmonics
and depend on the angles
$\theta_1$ and $\theta_2$ only,
we have
\begin{equation}\label{eq:z}
-\Delta z_i=\left(\frac{1}{r_1^2}+\frac{1}{r_2^2}\right)z_i \qquad
\textrm{for $i=2,3$.}
\end{equation}
Joining this with the linearized equation \eqref{linearizedeq}
solved by the $w_i$'s, we obtain the equation for $w$.
\begin{prop}\label{eqw}
The function $w$ is a solution to the following equation
\begin{equation}\label{linearizedeq2}
-\Delta w - \frac{a}{\abs{x}^2}w
-f'_y(\abs{x},u)w+\left(\frac{1}{r_1^2}+\frac{1}{r_2^2}\right)w=0.
\end{equation}
\end{prop}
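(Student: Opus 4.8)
The plan is to exploit the separated form $w_i=w(r_1,r_2)\,z_i(\theta_1,\theta_2)$ together with the fact, already recorded, that each $w_i$ solves the linearized equation \eqref{linearizedeq}. Writing the flat Laplacian of $\R^4\cong\R^2\times\R^2$ in the polar coordinates \eqref{polarcoordinates},
\[
\Delta=\partial_{r_1}^2+\frac{1}{r_1}\partial_{r_1}+\frac{1}{r_1^2}\partial_{\theta_1}^2+\partial_{r_2}^2+\frac{1}{r_2}\partial_{r_2}+\frac{1}{r_2^2}\partial_{\theta_2}^2,
\]
one sees that the two blocks act on complementary sets of variables. Since $w$ depends only on $(r_1,r_2)$ and $z_i$ only on $(\theta_1,\theta_2)$, the cross term in the Leibniz rule vanishes, $\nabla w\cdot\nabla z_i=0$, because in each $\R^2$ factor the radial and angular directions are orthogonal. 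Hence, reading $w$ as a biradial function on $\R^4$ (so that its Laplacian carries no angular contribution and equals $\partial_{r_1}^2 w+\frac1{r_1}\partial_{r_1}w+\partial_{r_2}^2 w+\frac1{r_2}\partial_{r_2}w$), and using the identity \eqref{eq:z} for the spherical harmonics $z_i$,
\[
\Delta(w\,z_i)=z_i\,\Delta w+w\,\Delta z_i=z_i\,\Delta w-\Big(\frac{1}{r_1^2}+\frac{1}{r_2^2}\Big)w\,z_i.
\]

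Next I would substitute $w_i=w\,z_i$ into the linearized equation $-\Delta w_i-\frac{a}{\abs{x}^2}w_i-f'_y(\abs{x},u)w_i=0$, which after the previous identity collapses to
\[
z_i\,\Big[-\Delta w-\frac{a}{\abs{x}^2}w-f'_y(\abs{x},u)w+\Big(\frac{1}{r_1^2}+\frac{1}{r_2^2}\Big)w\Big]=0.
\]
Since $z_2=\sin(\theta_1+\theta_2)$ and $z_3=-\cos(\theta_1+\theta_2)$ never vanish simultaneously (and each vanishes only on a set of measure zero), one may divide by $z_i$ on the open set $\{z_i\neq0\}$ and pass to the closure by continuity --- equivalently, one may simply evaluate the bracket at a fixed angle with $z_i\neq0$ --- obtaining \eqref{linearizedeq2} as an identity in $(r_1,r_2)$, hence on all of $\R^4\setminus\{0\}$.

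The computation is essentially forced once the separation of variables and \eqref{eq:z} are in hand; the only points that deserve a line of care are the bookkeeping behind the symbol $\Delta w$ --- namely that $w$ is to be regarded as a biradial function on $\R^4$, whose Laplacian therefore contains no $\theta$-derivatives --- and the harmless division by $z_i$ across its zero set. I do not anticipate any genuine obstacle.
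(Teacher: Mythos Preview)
Your proof is correct and follows essentially the same route as the paper: separate variables, use $\nabla w\cdot\nabla z_i=0$ together with \eqref{eq:z} to reduce the linearized equation for $w_i$ to $z_i[\cdots]=0$, and then strip off $z_i$. The only cosmetic difference is in this last step: the paper multiplies by $z_i$ and sums over $i$ (using $z_2^2+z_3^2=1$), whereas you divide by $z_i$ on its nonzero set --- both are equally clean.
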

\begin{proof} It holds that
\[
f'_y(\abs{x},u)w_i
=-\Delta(wz_i)- \frac{a}{\abs{x}^2}wz_i=-\Delta w\,z_i-\nabla
w\cdot\nabla z_i-w\Delta z_i- \frac{a}{\abs{x}^2}wz_i.
\]
Since $\nabla w\cdot\nabla z_i=0$,
thanks to \eqref{eq:z}, this becomes
$$-\Delta w \,z_i-
\frac{a}{\abs{x}^2}wz_i=f'_y(\abs{x},u)w\,z_i+w\Delta
z_i=f'_y(\abs{x},u)w\,z_i-\left(\frac{1}{r_1^2}+\frac{1}{r_2^2}\right)wz_i$$
that is
\[
z_i\,\bigg\{-\Delta w - \frac{a}{\abs{x}^2}w - f'_y(\abs{x},u)w +
\Big(\frac{1}{r_1^2} + \frac{1}{r_2^2} \Big)\bigg\} = 0.
\]
Last, multiplying by $z_i$ and summing for $i=1,2$ we obtain the desired equation.  
\end{proof}

\section{Proofs}\label{sec:proofs}

We will split the argument according to the Morse index of solution $u$:
we denote it by $m(u)$.

In order to complete our proof, we need a couple of preliminary results: the
first one is about the asymptotics of the solution and is contained in \cite{FMT08}.

\begin{lem}(\cite{FMT08})\label{fft}
Under the assumptions of Theorem \ref{doublyradialth}, let $u$ be any solution to \eqref{eq:main}. Then 
the following asymptotics hold
\begin{eqnarray}
u(x) \sim \abs{x}^\gamma \psi(\dfrac{x}{\abs{x}}) \qquad \textrm{for $\abs{x}\ll1$}\label{asintotico fmt origine}\\
u(x) \sim \abs{x}^\delta \psi(\dfrac{x}{\abs{x}}) \qquad \textrm{for $\abs{x}\gg1$}\label{asintotico fmt infinito}
\end{eqnarray}
where
$\gamma=\gamma(a,N)=-\frac{N-2}{2}+\sqrt{\left(\frac{N-2}{2}\right)^2+\mu}$,
$\delta=\delta(a,N)=-\frac{N-2}{2}-\sqrt{\left(\frac{N-2}{2}\right)^2+\mu}$
and $\mu=\mu(a,N)$ is one of the eigenvalues of
$-\Delta_{\sphere{N-1}}-a$ on $\sphere{N-1}$, and $\psi$ one of its
related eigenfunctions.
\end{lem}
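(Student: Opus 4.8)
Since this is the statement already established in \cite{FMT08}, one route is simply to cite it; let me nonetheless outline the argument one would run. The plan is to separate variables on the sphere, to absorb the nonlinearity into a lower-order perturbation by exploiting the critical growth bound, and to identify the leading homogeneity through an Almgren-type monotonicity formula. First I would write $x=r\sigma$ with $r=\abs{x}$, $\sigma\in\sphere{N-1}$, fix an orthonormal basis $\{Y_j\}_{j\ge0}$ of $L^2(\sphere{N-1})$ consisting of eigenfunctions of $-\Delta_{\sphere{N-1}}-a$ with eigenvalues $\mu_0\le\mu_1\le\cdots$, and expand $u(r\sigma)=\sum_j\phi_j(r)Y_j(\sigma)$. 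The assumption $a>-\big(\frac{N-2}{2}\big)^2$ guarantees, via the Hardy inequality, that $\mu_j+\big(\frac{N-2}{2}\big)^2>0$ for all $j$, so the exponents $\gamma_j,\delta_j=-\frac{N-2}{2}\pm\sqrt{\big(\frac{N-2}{2}\big)^2+\mu_j}$ are real; each coefficient then solves the Euler-type equation
\[
-\phi_j''-\frac{N-1}{r}\phi_j'+\frac{\mu_j}{r^2}\phi_j=g_j(r),
\]
where $g_j$ is the $j$-th Fourier coefficient of $f(\abs{x},u)$ and the homogeneous equation has fundamental system $\{r^{\gamma_j},r^{\delta_j}\}$.

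Next I would control the forcing term. Since $u\in\dom$ and the growth restriction on $f'_y$ yields $\abs{f(\abs{x},y)}\le C(\abs{y}+\abs{y}^{2^*-1})$, a Brezis--Kato/Moser iteration performed with $-\Delta-\frac{a}{\abs{x}^2}$ kept as the principal part shows that $u$ is bounded on dyadic annuli with the expected scaling, so that $\abs{x}^2 f(\abs{x},u)$ is genuinely subcritical near $0$ (and, after a Kelvin transform, near $\infty$). Variation of parameters in the ODE above, together with the finiteness of $\int_0^1\big(\abs{\phi_j'}^2+\mu_j r^{-2}\phi_j^2\big)r^{N-1}\,dr$, then rules out the singular branch $r^{\delta_j}$ and yields $\phi_j(r)=O(r^{\gamma_j})$ as $r\to0$; in particular $u(x)=O(\abs{x}^{\gamma_0})=O(\abs{x}^{\gamma})$ and the rescalings $v_\rho(\sigma):=\rho^{-\gamma}u(\rho\sigma)$ stay bounded in $H^1(\sphere{N-1})$.

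To upgrade this bound to the sharp asymptotic \eqref{asintotico fmt origine}, I would study the Almgren frequency function
\[
\mathcal N(r)=\frac{r\int_{B_r}\big(\abs{\nabla u}^2-\frac{a}{\abs{x}^2}u^2-f(\abs{x},u)u\big)\,dx}{\int_{\partial B_r}u^2\,d\sigma},
\]
show, using the subcriticality just obtained, that it is monotone up to integrable corrections and hence admits a finite limit as $r\to0$, and identify that limit: it must coincide with one of the values $\gamma_j$, namely the exponent $\gamma=\gamma(a,N)$ associated with the relevant eigenvalue $\mu$ of the statement (the $\dom$ bound excluding the too-singular roots). A standard blow-up argument then gives $v_\rho\to\psi$ strongly in $H^1(\sphere{N-1})$ for some eigenfunction $\psi$ associated with $\mu$. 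The behaviour at infinity \eqref{asintotico fmt infinito} follows by running the same analysis on the Kelvin transform $\widetilde u(x)=\abs{x}^{2-N}u(x/\abs{x}^2)$, which solves an equation of the same type near the origin and exchanges the exponent $\gamma$ with $-(N-2)-\gamma=\delta$.

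The main obstacle is that the inverse-square potential is scaling-critical and cannot be treated perturbatively: the regularity step, the monotonicity of $\mathcal N$, and the blow-up classification must all be carried out with $-\Delta-\frac{a}{\abs{x}^2}$ regarded as a single operator, and the delicate point is bounding the error terms produced by the coupling $f(\abs{x},u)u$ in the derivative of $\mathcal N$. This is exactly the technical content of \cite{FMT08}, which I would quote rather than reproduce.
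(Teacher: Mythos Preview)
The paper does not give a proof of this lemma at all: it is stated as a citation of \cite{FMT08} and used as a black box. Your proposal correctly recognizes this and would be acceptable as a one-line reference; the outline you add (spherical expansion, Brezis--Kato regularity with the Hardy operator kept as principal part, Almgren frequency monotonicity, blow-up to an eigenfunction, Kelvin inversion for the far-field) is a fair summary of the strategy in \cite{FMT08} and goes well beyond what the present paper records.
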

This turns out to be the key for proving the following result.
\begin{lem}
The function $\left(\frac{1}{r_1^2}+\frac{1}{r_2^2}\right)w^2$
is $L^1$-integrable on $\R^N$.
\end{lem}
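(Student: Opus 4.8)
The plan is to use the asymptotic estimates from Lemma~\ref{fft} to control $w$ near the origin and near infinity, since the potential $\bigl(\frac{1}{r_1^2}+\frac{1}{r_2^2}\bigr)$ is singular exactly on the coordinate subspaces $\{\xi=0\}$ and $\{\zeta=0\}$ but, crucially, $w = \frac{\partial u}{\partial r_1}r_2 - \frac{\partial u}{\partial r_2}r_1$ carries matching vanishing factors $r_2$ and $r_1$ there. First I would observe that on $\{r_1=0\}$ one has $w = \frac{\partial u}{\partial r_1}\,r_2$ (the other term drops), and for a biradial function regular in $\xi$ the quantity $\frac{\partial u}{\partial r_1}/r_1$ is bounded near $r_1=0$, so $\frac{w}{r_1}$ stays bounded there; symmetrically $\frac{w}{r_2}$ is bounded near $r_2=0$. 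Hence $\frac{w^2}{r_1^2}$ and $\frac{w^2}{r_2^2}$ are locally integrable away from the origin, and the only genuine issues are the behaviour at $x=0$ and at $\abs{x}=\infty$.

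Next I would split $\R^N$ into a small ball $B_\rho$, an annulus $B_R\setminus B_\rho$, and the exterior $\R^N\setminus B_R$. On the annulus, by elliptic regularity $u$ is $C^1$ up to (a neighbourhood of) the compact annulus minus the subspaces, and the boundedness of $w/r_1$, $w/r_2$ noted above, together with compactness, gives that $\bigl(\frac{1}{r_1^2}+\frac{1}{r_2^2}\bigr)w^2$ is integrable there. On $B_\rho$ I would insert the asymptotics \eqref{asintotico fmt origine}: $u(x)\sim\abs{x}^\gamma\psi(x/\abs{x})$ with $\gamma = -\frac{N-2}{2}+\sqrt{(\frac{N-2}{2})^2+\mu}>-\frac{N-2}{2}$. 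Differentiating the asymptotic expansion (which the reference provides in the appropriate $C^1_{\mathrm{loc}}$ sense, or after a Schauder/scaling argument applied to the linearized equation), $\nabla u$ behaves like $\abs{x}^{\gamma-1}$ times an angular factor, so $w$, being a first-order derivative combination, is controlled by $\abs{x}^{\gamma}$ after accounting for one extra power of $\abs{x}$ that multiplies the relevant component; then $\bigl(\frac{1}{r_1^2}+\frac{1}{r_2^2}\bigr)w^2 \lesssim \abs{x}^{2\gamma-2}$ near $0$, which is integrable in $\R^N$ precisely when $2\gamma-2 > -N$, i.e. $\gamma > \frac{2-N}{2} = -\frac{N-2}{2}$, which holds. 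The exterior estimate is entirely analogous using \eqref{asintotico fmt infinito} with $\delta = -\frac{N-2}{2}-\sqrt{(\frac{N-2}{2})^2+\mu}$: one gets $\bigl(\frac{1}{r_1^2}+\frac{1}{r_2^2}\bigr)w^2 \lesssim \abs{x}^{2\delta-2}$ at infinity, integrable in $\R^N\setminus B_R$ iff $2\delta - 2 < -N$, i.e. $\delta < -\frac{N-2}{2}$, again true.

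The main obstacle I anticipate is making rigorous the passage from the $C^0$-type asymptotics $u\sim\abs{x}^\gamma\psi(x/\abs{x})$ stated in Lemma~\ref{fft} to a corresponding gradient estimate $\abs{\nabla u}\lesssim\abs{x}^{\gamma-1}$ valid uniformly up to the singular subspaces $\{r_1=0\}\cup\{r_2=0\}$ — the potential $\frac{1}{r_1^2}+\frac{1}{r_2^2}$ is itself unbounded there, so a naive interior Schauder estimate applied to the linearized equation \eqref{linearizedeq} will not close without additional care. I would handle this either by invoking the sharper (gradient) version of the Felli--Ferrero--Terracini asymptotics if available in \cite{FMT08}, or by a blow-up/rescaling argument around points $x_0$ with $\abs{x_0}$ small: set $u_\lambda(y)=\lambda^{-\gamma}u(\lambda y)$, note $u_\lambda$ solves an equation with the same (scale-invariant) singular potential plus a lower-order term tending to $0$, apply local $C^1$ estimates on a fixed annular region $1/2<\abs{y}<2$ away from where the extra potential bites, and deduce the desired bound on $\abs{\nabla u}(x_0)$. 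Combined with the observation that the geometrically adapted combination $w$ vanishes to the right order on the coordinate subspaces, this yields the claimed $L^1$-integrability; assembling the three regions then finishes the proof.
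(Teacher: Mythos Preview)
Your overall strategy coincides with the paper's: split into a compact annulus (handled by regularity and the biradial observation that $\frac{1}{r_i}\partial_{r_i}u$ extends continuously across $\{r_i=0\}$) and neighbourhoods of $0$ and $\infty$ (handled via the Felli--Marchini--Terracini asymptotics). The paper in fact begins with the elementary pointwise bound
\[
\tfrac{1}{4}\Bigl(\tfrac{1}{r_1^2}+\tfrac{1}{r_2^2}\Bigr)w^2
\leq \Bigl(\tfrac{r_2}{r_1}\Bigr)^2\Bigl(\tfrac{\partial u}{\partial r_1}\Bigr)^2
+\Bigl(\tfrac{r_1}{r_2}\Bigr)^2\Bigl(\tfrac{\partial u}{\partial r_2}\Bigr)^2
+\abs{\nabla u}^2,
\]
so the last piece is in $L^1$ simply because $u\in\dom$, and only the mixed terms require the asymptotics.

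Where your write-up has a genuine gap is the displayed estimate $\bigl(\tfrac{1}{r_1^2}+\tfrac{1}{r_2^2}\bigr)w^2 \lesssim \abs{x}^{2\gamma-2}$ near the origin. From $\abs{\nabla u}\lesssim\abs{x}^{\gamma-1}$ you only get $w\lesssim\abs{x}^{\gamma}$, and since $\tfrac{1}{r_1^2}+\tfrac{1}{r_2^2}$ is \emph{not} dominated by $\abs{x}^{-2}$ (it blows up along $\{r_1=0\}\cup\{r_2=0\}$, which meets every neighbourhood of the origin), the claimed bound does not follow. Your proposed fixes---a gradient version of the FMT asymptotics, or a rescaling argument on annuli---would give you $\abs{\nabla u}\lesssim\abs{x}^{\gamma-1}$, but that is still not enough to absorb the extra $r_1^{-2}$.

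The missing ingredient, which the paper makes explicit, is that the angular profile $\psi$ in $u(x)\sim\abs{x}^{\gamma}\psi(x/\abs{x})$ is itself \emph{biradial} (being an eigenfunction of $-\Delta_{\sphere{N-1}}$ inherited from a biradial $u$), hence the restriction to the sphere of a biradial harmonic polynomial. Consequently $\partial_{r_1}\psi$ vanishes linearly in $r_1$, i.e.\ $\bigl(\partial_{r_1}\psi\bigr)^2\sim r_1^2$, and this is exactly what cancels the $r_1^{-2}$ in $\tfrac{1}{r_1^2}\bigl(\partial_{r_1}u\bigr)^2$ near the origin. In other words, the ``$w$ vanishes to the right order on the coordinate subspaces'' observation from your first paragraph has to be carried \emph{inside} the asymptotic expansion, via the structure of $\psi$, and not merely invoked as a separate local fact. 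Once you add this one line the argument closes, and the treatment at infinity is entirely analogous with $\delta$ in place of $\gamma$.
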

\begin{proof}
Since $w(r_1,r_2)=\frac{\partial u}{\partial
r_1}r_2-\frac{\partial u}{\partial r_2}r_1$, we first observe that by regularity of $u$
outside the origin and its radial symmetry, the functions
\[\frac 1{r_i}\frac{\partial u}{\partial
r_i}
\]
$i=1,2$, are continuous outside the origin. Next we remark that
\begin{multline*}
\frac14\left(\frac{1}{r_1^2}+\frac{1}{r_2^2}\right)w^2\leq\lefteqn{\left(\frac{1}{r_1^2}+\frac{1}{r_2^2}\right)\left\{\left(\frac{\partial
u}{\partial r_1}\right)^2r_2^2+\left(\frac{\partial u}{\partial
r_2}\right)^2r_1^2\right\}}\\
 = \left(\frac{r_2}{r_1}\right)^2\left(\frac{\partial u}{\partial
r_1}\right)^2 + \left(\frac{r_1}{r_2}\right)^2\left(\frac{\partial
u}{\partial r_2}\right)^2 + \left(\frac{\partial u}{\partial
r_1}\right)^2 + \left(\frac{\partial u}{\partial r_2}\right)^2.
\end{multline*}
The integrability of the last two terms is a straightforward
consequence of $u\in\dom$. In order to study the other two terms,
let us focus our attention in a ball around the origin, namely
$B_1(0)$, so that $r_1^2+r_2^2\leq1$. Then
$$\left(\frac{\partial u}{\partial r_1}\right)^2\left(\frac{r_2}{r_1}\right)^2
\leq \frac{1}{r_1^2}\left(\frac{\partial u}{\partial
r_1}\right)^2-\left(\frac{\partial u}{\partial r_1}\right)^2,$$ so
that the question of integrability is restricted to the first term.
From Lemma \ref{fft}, Equation \eqref{asintotico fmt origine} we know $u\sim r^\gamma\psi(r_1,r_2)
=(r_1^2+r_2^2)^{\gamma/2}\psi(r_1,r_2)$, from which
$$\dfrac{\partial u}{\partial r_1}\sim\psi(r_1,r_2)\gamma(r_1^2+r_2^2)^{\gamma/2-1}r_1
+ (r_1^2+r_2^2)^{\gamma/2}\dfrac{\partial \psi}{\partial r_1}.$$
So we are lead to consider the integrability of $\int_{B_1(0)}\frac{1}{r_1^2}\left(\frac{\partial\psi}{\partial r_1}\right)^2$.
Additionally we know that $\psi$ is the restriction on the sphere of a harmonic polynomial, then
it is analytic and its Taylor's expansion
is a polynomial whose degree 1 terms vanish, since it is a function of the only
variables $r_1$ and $r_2$. Then $\left(\frac{\partial \psi}{\partial r_1}\right)^2 \sim r_1^2$,
which provides the sought integrability.

For what concerns the integrability at infinity, it is sufficient to show that the
terms of type $\frac{r_2^2}{r_1^2}\left(\frac{\partial u}{\partial r_1}\right)^2$
are in $L^1(\R^N)$. We have
$$\frac{r_2^2}{r_1^2}\left(\frac{\partial u}{\partial r_1}\right)^2
\leq 2\left\{ (r_1^2+r_2^2)^{\delta-2}r_2^2\psi^2
+ \dfrac{r_2^2}{r_1^2}(r_1^2+r_2^2)^\delta\left(\dfrac{\partial\psi}{\partial r_1}\right)^2\right\}$$
and exploiting equation \eqref{asintotico fmt infinito}, the expression of the exponent $\delta$ provides the sought integrability. 
\end{proof}

In the following we consider the cut-off function defined as $\eta(r_1,r_2)=\eta_1(r_1)\eta_2(r_2)$ where
\begin{eqnarray*}
 \eta_1(r_1) = \left\{
\begin{array}{ll}
 \dfrac{1}{\log{(R_2/R_1)}}\log{r_1/R_1} \quad &\textrm{for $R_1\leq r_1\leq R_2$} \\
 1 \quad &\textrm{for $R_2\leq r_1\leq R_3$} \\
 1 - \dfrac{1}{\log{(R_4/R_3)}}\log{r_1/R_3} \quad &\textrm{for $R_3\leq r_1\leq R_4$}\\
 0 \quad &\textrm{elsewhere,} \\
\end{array}\right.
\end{eqnarray*}
$\eta_2$ being defined similarly. Given the special form of $\eta$, we note
$\abs{\nabla \eta}^2 \leq \abs{\nabla\eta_1}^2 +
\abs{\nabla\eta_2}^2$, that is
\begin{eqnarray*}
\abs{\nabla \eta}^2 \leq \dfrac{1}{\log^2{R_2/R_1}}\left(\frac{1}{r_1^2} + \frac{1}{r_2^2} \right) \qquad \textrm{for $R_1 \leq r_1\,,r_2 \leq R_2$} \\
\end{eqnarray*}
and analogously for $R_3 \leq r_1\,,r_2 \leq R_4$. Thus, we have
\begin{equation}\label{stima_gradiente_eta}
\abs{\nabla\eta}^2 \leq 3\left(\dfrac{1}{\log^2
R_4/R_3}+\dfrac{1}{\log^2
R_2/R_1}\right)\left(\dfrac{1}{r_1^2}+\dfrac{1}{r_2^2}\right).
\end{equation}

\begin{lem}\label{w+w-}
There is a suitable choice of the parameters $R_1$, $R_2$, $R_3$ and
$R_4$ such that the quadratic form associated to the operator
\eqref{linearizedop} is negative definite both on $\eta\,w^+$ and
$\eta\,w^-$.
\end{lem}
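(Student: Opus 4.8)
The idea is to use the function $w$ itself as a test function to detect negativity of the quadratic form, but since $w$ need not have compact support away from the origin and from infinity, we must truncate with the logarithmic cut-off $\eta$ introduced above and control the error terms. Recall that by Proposition \ref{eqw} the function $w$ solves
\[
-\Delta w - \frac{a}{\abs{x}^2}w - f'_y(\abs{x},u)w + \Big(\frac{1}{r_1^2}+\frac{1}{r_2^2}\Big)w = 0,
\]
so that, testing against $\eta^2 w^{\pm}$ and integrating by parts, the quadratic form $Q(v)=\int |\nabla v|^2 - \frac{a}{\abs{x}^2}v^2 - f'_y(\abs{x},u)v^2$ evaluated at $v=\eta w^{\pm}$ satisfies, by a standard Picone-type manipulation,
\[
Q(\eta w^{\pm}) = \int |\nabla \eta|^2 (w^{\pm})^2 - \int \eta^2 \Big(\frac{1}{r_1^2}+\frac{1}{r_2^2}\Big)(w^{\pm})^2.
\]
Here one uses $\int \nabla(\eta^2 w^{\pm})\cdot \nabla w^{\pm} = \int |\nabla(\eta w^{\pm})|^2 - \int |\nabla\eta|^2(w^{\pm})^2$ together with the equation for $w$ (the sign of $w^{\pm}$ on the two regions $\{w>0\}$, $\{w<0\}$ makes the product with $w$ work out). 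Thus it suffices to show that the right-hand side can be made strictly negative, i.e. that the gradient-of-cut-off term is dominated by the good negative Hardy-type term.

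First I would fix an annular region $R_2\le r_1,r_2\le R_3$ on which $\eta\equiv 1$ and on which $w$ is not identically zero; since $w\not\equiv 0$ (otherwise $u$ is radial and we are done, which is the case we do NOT need to handle here), we may assume $w^{+}$, say, is nontrivial there — if both $w^+$ and $w^-$ vanish identically on every such fixed annulus for all choices of $R_2<R_3$ then $w\equiv 0$. On that fixed region the negative term $-\int_{\{R_2\le r_i\le R_3\}}(\tfrac{1}{r_1^2}+\tfrac{1}{r_2^2})(w^+)^2$ is a fixed strictly negative number, call it $-c_0<0$. The error term is supported only in the two transition annuli $R_1\le r_i\le R_2$ and $R_3\le r_i\le R_4$, and by the estimate \eqref{stima_gradiente_eta},
\[
\int |\nabla\eta|^2 (w^{+})^2 \le 3\Big(\tfrac{1}{\log^2(R_2/R_1)}+\tfrac{1}{\log^2(R_4/R_3)}\Big)\int_{\text{transition}} \Big(\tfrac{1}{r_1^2}+\tfrac{1}{r_2^2}\Big)(w^{+})^2.
\]
By the previous Lemma the weight $(\tfrac{1}{r_1^2}+\tfrac{1}{r_2^2})w^2$ is globally $L^1(\R^N)$; hence $\int_{\text{transition}}(\tfrac{1}{r_1^2}+\tfrac{1}{r_2^2})(w^+)^2 \le \|(\tfrac{1}{r_1^2}+\tfrac{1}{r_2^2})w^2\|_{L^1(\R^N)} =: M < \infty$ uniformly in the choice of parameters. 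Therefore, keeping $R_2,R_3$ fixed and sending $R_1\to 0$ and $R_4\to\infty$, the prefactor $3(\tfrac{1}{\log^2(R_2/R_1)}+\tfrac{1}{\log^2(R_4/R_3)})\to 0$, so the error term is $\le \varepsilon M$ with $\varepsilon$ as small as we like, while the good term stays $\le -c_0$. Choosing $\varepsilon$ with $\varepsilon M < c_0$ gives $Q(\eta w^+)<0$ for that choice of parameters, and the same argument applies to $\eta w^-$ (on whichever fixed annulus makes it nontrivial, enlarging the fixed region if necessary so that one choice of $R_1,\dots,R_4$ works for both signs simultaneously).

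The main obstacle — and the reason the earlier lemmas are needed — is precisely the uniform $L^1$ control of the Hardy weight against $w^2$ near the origin and at infinity; without it, shrinking $R_1$ or enlarging $R_4$ could make the transition-annulus integral blow up faster than $1/\log^2$ decays, and the scheme would fail. A secondary technical point is the justification of the integration by parts / Picone identity with the singular potential $a/\abs{x}^2$ and with $w^{\pm}$ (which is only Lipschitz, not $C^1$): this is handled by the condition $a>-\big(\tfrac{N-2}{2}\big)^2$, which guarantees via Hardy's inequality that $Q$ is well-defined and closable on $\dom$-type spaces, and by approximating $w^{\pm}$ in the usual way, using $\eta$ to kill boundary contributions at $0$ and $\infty$. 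Granting these, the two error annuli are genuinely harmless and the lemma follows.
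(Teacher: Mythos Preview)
Your argument is correct and follows the same strategy as the paper: test the equation for $w$ against $\eta^2 w^{\pm}$, use the Picone-type identity to obtain $Q(\eta w^{\pm})=\int|\nabla\eta|^2(w^{\pm})^2-\int\eta^2(\tfrac{1}{r_1^2}+\tfrac{1}{r_2^2})(w^{\pm})^2$, and then exploit the logarithmic decay of the cut-off gradient together with the $L^1$-bound on $(\tfrac{1}{r_1^2}+\tfrac{1}{r_2^2})w^2$ to make the right-hand side negative. The only cosmetic difference is the parametrisation: the paper takes $R_1=\varepsilon^2$, $R_2=\varepsilon$, $R_3=\varepsilon^{-1}$, $R_4=\varepsilon^{-2}$ and lets $\varepsilon\to 0$ (so the plateau region itself expands and the negative term converges to the full integral $\int(\tfrac{1}{r_1^2}+\tfrac{1}{r_2^2})(w^{\pm})^2$), whereas you freeze $R_2,R_3$ and send only $R_1\to 0$, $R_4\to\infty$; both routes achieve the same goal, and your extra remark about enlarging the plateau so that a single choice of parameters handles $w^+$ and $w^-$ simultaneously is a point the paper leaves implicit.
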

\begin{proof}
Let us fix $\varepsilon>0$ small and choose $R_1=\varepsilon^2$,
$R_2=\varepsilon$ and $R_3=\varepsilon^{-1}$,
$R_4=\varepsilon^{-2}$. We multiply equation \eqref{linearizedeq2}
by $\eta^2 w^+$ and integrate by parts. We obtain
\begin{eqnarray}\label{lemma cut off positivita autofunz}
 \lefteqn{\int_{\R^N} \abs{\nabla(\eta^2 w^+)}^2 - \dfrac{a}{\abs{x}^2}(\eta^2 w^+)^2 - f_y'(\abs{x},u)\eta^2(w^+)^2 } \nonumber \\
&&= \int_{\R^N} \abs{\nabla\eta}^2(w^+)^2 - \int_{\R^N}\left(\frac{1}{r_1^2}+\frac{1}{r_2^2}\right)\eta^2(w^+)^2\,.
\end{eqnarray}
If $\varepsilon$ is small enough, the second term in \eqref{lemma cut
off positivita autofunz} is far away from zero, or rather, it is
quite close to
$\int_{\R^N}\left(\frac{1}{r_1^2}+\frac{1}{r_2^2}\right)(w^+)^2$,
say for instance
$$\int_{\R^N}\left(\frac{1}{r_1^2}+\frac{1}{r_2^2}\right)\eta^2(w^+)^2
>\frac{1}{2}\int_{\R^N}\left(\frac{1}{r_1^2}+\frac{1}{r_2^2}\right)(w^+)^2.$$
On the other hand, the first term in \eqref{lemma cut off positivita
autofunz} can be made very small with respect to\\
$\int_{\R^N}\left(\frac{1}{r_1^2}+\frac{1}{r_2^2}\right)(w^+)^2$,
since from \eqref{stima_gradiente_eta}
$$\int_{\R^N}\abs{\nabla\eta}^2(w^+)^2 \leq
\dfrac{6}{\log^2\varepsilon}\int_{\R^N}\left(\frac{1}{r_1^2}+\frac{1}{r_2^2}\right)(w^+)^2,$$
so that \eqref{lemma cut off positivita autofunz} is seen to be
negative.

Repeating the same argument multiplying by $\eta^2 w^-$ we reach the same conclusion.
\end{proof}

\noindent{\bf First case: Morse index $m(u)=0$.} In this case Lemma \ref{w+w-}
clearly contradicts the hypothesis $m(u)=0$,
unless $w^+ = w^- \equiv 0$, that is the only stable solution to \eqref{linearizedeq2}
is the trivial one. 

\noindent{\bf Second case: Morse index $m(u)=1$.}
 { \it In this case we infer that $w$ has constant sign}, say positive,
and therefore $w>0$ for $r_1>0$ and $r_2>0$ by the  Strong Maximum Principle. 
 Now we show a contradiction.  Consider a
vector field of the form $\alpha X_2+\beta
  X_3$. Along this vector field, choosing $\alpha=\cos\gamma$ and $\beta=\sin\gamma$,
the derivative of $u$ is
  \begin{equation*}
  \nabla u\cdot (\alpha X_2+\beta
  X_3)=\alpha w_2+\beta
  w_3=w\big(\alpha\sin(\theta_1+\theta_2)-\beta\cos(\theta_1+\theta_2)\big)=-w\sin(\theta_1+\theta_2-\gamma).
  \end{equation*}
  Now we turn to the directional derivative of  $\theta_1+\theta_2$ along
the vector field $\alpha X_2 + \beta X_3$.
  Using the polar coordinates \eqref{polarcoordinates}, it results
  \begin{equation*}
  \theta_1=\arctan\frac{x_2}{x_1}\;,\qquad
  \theta_2=\arctan\frac{x_4}{x_3};
  \end{equation*}
  so that checking the motion along $X_2$ we have
  \begin{eqnarray*}
  \nabla \theta_1 \cdot X_2 &=& \frac{x_3x_1-x_2x_4}{x_1^2+x_2^2}
  =\frac{r_2}{r_1}(\cos\theta_1\cos\theta_2-\sin\theta_1\sin\theta_2)
  =\frac{r_2}{r_1}\cos(\theta_1+\theta_2)\\
  \nabla \theta_2 \cdot X_2 &=& \frac{-x_3x_1+x_2x_4}{x_3^2+x_4^2}
  =\frac{r_1}{r_2}(-\cos\theta_1\cos\theta_2+\sin\theta_1\sin\theta_2)
  =-\frac{r_1}{r_2}\cos(\theta_1+\theta_2),
  \end{eqnarray*}
  whereas along $X_3$
  \begin{eqnarray*}
  \nabla \theta_1 \cdot X_3 &=& \frac{x_4x_1+x_2x_3}{x_1^2+x_2^2}
  =\frac{r_2}{r_1}(\cos\theta_1\sin\theta_2 + \sin\theta_1\cos\theta_2)
  =\frac{r_2}{r_1}\sin(\theta_1+\theta_2)\\
  \nabla \theta_2 \cdot X_3 &=& \frac{-x_2x_3-x_1x_4}{x_3^2+x_4^2}
  =\frac{r_1}{r_2}(-\sin\theta_1\cos\theta_2-\cos\theta_1\sin\theta_2)
  =-\frac{r_1}{r_2}\sin(\theta_1+\theta_2);
  \end{eqnarray*}
and finally we obtain
  \begin{multline}
  \nabla(\theta_1+\theta_2)\cdot(\alpha X_2+\beta X_3)=
\left(\frac{r_2}{r_1}-\frac{r_1}{r_2}\right)\big(\alpha\cos(\theta_1+\theta_2)+
\beta\sin(\theta_1+\theta_2)\big)\nonumber \nonumber \\ \label{eqtheta}
  =\left(\frac{r_2}{r_1}-\frac{r_1}{r_2}\right)\cos(\theta_1+\theta_2-\gamma).
  \end{multline}
  Now we are in good position to conclude.
  For a given point $\overline{x}$ of the sphere -
located by angles $\overline{\theta_1}$ and $\overline{\theta_2}$, we choose
  $\gamma=\gamma(\overline{x})=\overline{\theta_1}+\overline{\theta_2}-\pi/2$,
  so that the quantity $\theta_1+\theta_2$ is at rest for the associated vector
field $\cos\gamma X_2+\sin\gamma X_3$. With
  this choice the function $u$ is monotone along the flow $\alpha X_2+\beta
  X_3$ since $\dot{u}=-w\sin(\theta_1+\theta_2-\gamma)=-w$ and the sign of
  $w$ is constant by the previous discussion. Since the trajectory of
  the flow is a circle, we will reach again the initial point in
  finite time, but with a strictly smaller value of $u$
(if we consider the first eigenfunction $w$ positive). This is
  clearly a contradiction.

\noindent \textit{Generalization to higher dimensions.} In dimension
 $N\geq5$ the argument is very similar. Relabeling we may always assume $u= u(\rho_1,\rho_2)$, where we have fixed the notation
 $\rho_1=\abs{\xi}$ and $\rho_2=\abs{\zeta}$, while
 $|x|=\sqrt{\abs{\xi}^2+\abs{\zeta}^2}$, being $x=(\xi,\zeta)\in \R^k\times\R^{N-k}$.
Now we repeat the
argument performed in the 4-dimensional space with respect to the variables $x_{k-1}$, $x_{k}$, $x_{k+1}$, $x_{k+2}$, considering the vector fields
with those same four components as above and the other ones being zero. Hence we define $r_1=\sqrt{x_{k-1}^2+x_{k}^2}$ and $r_2=\sqrt{x_{k+1}^2+x_{k+2}^2}$. When discussing the integrability properties, it can be worthwhile noticing that
\[\dfrac1{r_i}\dfrac{\partial u}{\partial r_i}=\dfrac1{\rho_i}\dfrac{\partial u}{\partial \rho_i}\;.\]
Arguing as above, we  can prove that the solution $u$ is actually
radial with respect to those four variables. We can imagine to
iterate this proceeding for every hyperplane whose rotations the function $u$ is supposed not to be invariant for.
Finally, it follows that $u$ is radial in $\R^N$.

\smallskip

{\it Proof of Theorem \ref{doublyradialthmsphere}.}
Since
now $v$ is a function defined over $\sphere{N}$,
recalling the Laplace operator in polar coordinates
 $$\Delta_{\R^{N+1}}=\partial_r^2+\frac{N}{r}\partial_r+\frac{1}{r^2}\Delta_{\sphere{N}},$$
we define $\widetilde{v}(x)=v(y)$ for
$x\in(-\varepsilon,\,\varepsilon) \times \sphere{N}$, so that
$\Delta_{\sphere{N}}v=\Delta_{\R^{N+1}}\widetilde{v}$. At first, let us suppose $N=3$. Obviously,
since $v$ is invariant with respect to the group $O(2)\times
O(2)$, so is $\widetilde{v}$.

Following the same argument in the proof of Theorem \ref{doublyradialth}, we wish to prove the vanishing of
$\widetilde{w}=\frac{\partial\widetilde{v}}{\partial
r_1}r_2-\frac{\partial\widetilde{v}}{\partial r_2}r_1$. On the other
hand, being $\widetilde{v}$ homogenous of degree 0, $\widetilde{w}$
is homogenuos of degree 0 too (it can be proved by differentiating
identity $\widetilde{v}(x)=\widetilde{v}(\lambda x)$), then the $w$
associated with $v$ is nothing else that $\widetilde{w}$ restricted on
the sphere $\sphere{N}$. Therefore
$\Delta_{\R^{N+1}}\widetilde{w}=\Delta_{\sphere{N}}w$, and following
the proof of Proposition \ref{eqw} we see $w$ is a solution to
$$ -\Delta_{\sphere{N}}w-f'(v)w+\left(\frac{1}{r_1^2}+\frac{1}{r_2^2}\right)w=0,$$
analogous to equation \eqref{linearizedeq2}. The rest of the proof fits also in this case. \qed

\section{An application to best Sobolev constants with symmetries}


Solutions to the critical exponent equation
\begin{equation}\label{magnetic eq}
-\Delta u = \frac{a}{|x|^2}u + \abs{u}^{2^*-2}u
\end{equation}
are related to extremals of Sobolev inequalities (cfr \cite{Ter96}).
To our purposes, the functions $u$ will be complex-valued and $a\in(-\infty,(N-2)^2/4)$.
Then, thanks to Hardy inequality,  an equivalent norm on $D^{1,2}(\R^N)$ is
$$\left(\int_{\R^N}\abs{\nabla u}^2-a\frac{\abs{u}^2}{|x|^2}\right)^{1/2},$$
hence we can  seek solutions to \eqref{magnetic
eq} as extremals of the Sobolev quotient associated with this norm on different symmetric spaces . 

The whole
group of rotations $SO(2)\times SO(N-2)$ induces the
following action on $D^{1,2}(\R^N;\C)$:
$$u(\xi,\zeta) \mapsto R^{-m} u(R\xi,T\zeta)$$
for $m\in\Z$ fixed. We denote, as usual, $D^{1,2}_{rad}(\R^N)$ and $D^{1,2}_{\text{birad}}(\R^N)$ the subspaces of real or complex radial and biradial functions. 
Moreover, let $k$ and $m$ be fixed integers; for a given rotation $R\in SO(2)$ of order $k$, we consider the space of symmetric functions
\[
D^{1,2}_{R,k,m}(\R^N;\C):=\{u\in D^{1,2}(\R^N;\C) : \ u(R\xi,T\zeta)=R^m u(\xi,\zeta), \forall\,\ T\in SO(N-2)\}.\]
This is of course a proper subspace of
\[
D^{1,2}_{\text{birad},m}(\R^N;\C):=\{u\in D^{1,2}(\R^N;\C) : \ u(S\xi,T\zeta)=S^m u(\xi,\zeta), \forall\,\ (S,T)\in SO(2)\times SO(N-2)\}.\]
 Note this last space coincides with the usual space of biradial solution once $m=0$.


Thanks to its rotational invariance, for any choice of the above spaces $D^{1,2}_{*}(\R^N;\C)$, solutions to the minimization problem
\begin{equation}\label{magnetic min problem}
 \inf_{u\in D^{1,2}_{*}(\R^N;\C) \atop u\neq 0}\frac{\displaystyle\int_{\R^{N}}\abs{\nabla u}^2 - a\frac{\abs{u}^2}{\abs{x}^2}}{\displaystyle\left(\int_{\R^{N}}\abs{u}^{2^*}\right)^{2/2^*}}
\end{equation}
 are in fact solutions to equation \eqref{magnetic eq}.

The minimization of the Sobolev quotient over the space of radial functions follows from a nowadays standard compactness argument; in addition, see for instance \cite{Ter96}, we have:
\[\inf_{u\in D^{1,2}_{rad}(\R^N;\C)\atop u\neq 0}\frac{\displaystyle \int_{\R^{N}}\abs{\nabla u}^2
- a\frac{\abs{u}^2}{\abs{x}^2}}{\displaystyle\left(\int_{\R^{N}}\abs{u}^{2^*}\right)^{2/2^*}}= S \Big(1-a\frac{4}{(N-2)^2}\Big)
\]
where $S$ denote the best constant for the standard Sobolev embedding.
Moreover, generalizing the results in \cite{CS10} in higher dimensions (see also \cite{ATer10}), one
can easily prove existence of minimizers of the Sobolev quotient \eqref{magnetic min problem} in the spaces $D^{1,2}_{\text{birad},m}(\R^N;\C)$, for any choice of the integer $m$.

At first, let us consider the  case $m=0$. Then it is easily checked that the minimizers can be chosen to be real valued and that the corresponding solution to \eqref{magnetic eq} have biradial Morse index exactly one. Hence our Theorem \ref{doublyradialth} applies and such biradial solutions are in fact fully radially symmetric, and therefore the infimum on the biradial space equals that on the radial. Now, let us turn to the case $m\neq 0$. We remark that elements of the space $D^{1,2}_{\text{birad},m}(\R^N;\C)$ have the form $u((\xi,\zeta)=\rho(\abs{\xi},\abs{\zeta})\e^{i m
\theta(\xi)}$, where $\theta(\xi)=\arg(\xi)$, so that
$$\abs{\nabla u}^2=\abs{\nabla\rho}^2 + \rho^2\abs{m\nabla\theta}^2 = \abs{\nabla\rho}^2 + m^2 \frac{\rho^2}{\abs{\xi}^2}.$$

Then the following chain of inequalities holds:
\begin{multline*}
\min_{u\in D^{1,2}_{\text{birad},m}(\R^N;\C) \atop u\neq 0}\frac{\displaystyle\int_{\R^{N}}\abs{\nabla u}^2
- a\frac{\abs{u}^2}{\abs{x}^2}}{\displaystyle\left(\int_{\R^{N}}\abs{u}^{2^*}\right)^{2/2^*}}
 = \min_{\rho\in D^{1,2}_{\text{birad}}(\R^N,\R) \atop \rho\neq 0} \frac{\displaystyle\int_{\R^N}\abs{\nabla\rho}^2
+ m^2 \frac{\rho^2}{\abs{\xi}^2}-a\frac{\rho^2}{\abs{x}^2}}{\displaystyle\left(\int_{\R^{N}}\rho^{2^*}\right)^{2/2^*}} \\
 >\min_{\rho\in D^{1,2}_{\text{birad}}(\R^N;\R) \atop \rho\neq 0} \frac{\displaystyle\int_{\R^N}\abs{\nabla\rho}^2
+ (m^2-a) \frac{\rho^2}{\abs{x}^2}}{\displaystyle\left(\int_{\R^{N}}\rho^{2^*}\right)^{2/2^*}}
 = \min_{\rho\in D^{1,2}_{rad}(\R^N;\R) \atop \rho\neq 0} \frac{\displaystyle\int_{\R^N}\abs{\nabla\rho}^2
+ (m^2-a) \frac{\rho^2}{\abs{x}^2}}{\displaystyle\left(\int_{\R^{N}}\rho^{2^*}\right)^{2/2^*}}\\
= S \Big(1+\frac{4(m^2-a)}{(N-2)^2}\Big)
\end{multline*}
where we have used $\abs{\xi}\leq\abs{x}$; the intermediate line follows again from Theorem \ref{doublyradialth}, and the last from \cite{Ter96}.
Then, this argument states a very useful lower bound (see \cite{ATer10}) to the minima problems \eqref{magnetic min problem}. Indeed, it allows us to compare the
infimum over the space of $D^{1,2}_{R,k,m}(\R^N;\C)$ with that  on $D^{1,2}_{\text{birad},m}(\R^N;\C)$, and to prove the occurrence of symmetry breaking in some circumstances. In fact it has been proven (see \cite{ATer10}) that, for large enough $k$, the first minimum is achieved and less that $k^{2/N}S$, while the latter increases with $\abs{a}$ and $m$. Symmetry breaking holds whenever it can be shown that $1+\frac{4(m^2-a)}{(N-2)^2}>k^{2/N}$ for appopriate choices of the parameters.

\section{Optimality with respect to the Morse index}

We want to stress our results Theorem \ref{doublyradialth} and \ref{doublyradialthmsphere} are sharp
with respect to the Morse index. By that, we mean that doubly radial solutions with Morse index greater or equal to 2,
need not to be completely radial.

To prove this, we will take advantage from a result proved by Ding in
\cite{D86} in such a way which will be clear later. The quoted paper by Ding has
to do with solutions to a related equation on $\sphere{N}$, for
this reason we state first some connections between these two
environments.

\subsection{Conformally equivariant equations}

We recall a general fact cited in \cite{D86} about elliptic
equations on Riemannian manifolds.

\begin{lem}\label{lemma equazioni}
Let $(M,g)$ and $(N,h)$ two Riemannian manifolds of dimensions
$N\geq3$. Suppose there is a conformal diffeomorphism
$f:\,M\rightarrow N$, that is $f^*h=\varphi^{2^*-2}g$ for some
positive $\varphi\in C^\infty(M)$. The scalar curvatures of $(M,g)$
and $(N,h)$ are $R_g$ and $R_h$ respectively. Set the following
corresponding equations:
\begin{eqnarray}\label{ding}
-\Delta_g u + \frac{1}{4}\frac{N-2}{N-1}R_g(x)u&=&F(x,u) \label{eqM} \\
-\Delta_h v + \frac{1}{4}\frac{N-2}{N-1}R_h(y)v&=&[(\varphi\circ
f^{-1})(y)]^{-\frac{N+2}{N-2}}F(f^{-1}(y),(\varphi\circ
f^{-1})(y)v)\label{eqN}
\end{eqnarray}
where $F:M\times\R\rightarrow\R$ is smooth. Suppose $v$ is a
solution of \eqref{eqN}. Then $u=(v\circ f)\varphi$ is a solution of
\eqref{eqM} such that $\int_M\abs{u}^{2^*}\,dV_g=\int_N
\abs{v}^{2^*}\,dV_h$.
\end{lem}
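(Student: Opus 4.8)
The plan is to verify directly that $u = (v\circ f)\varphi$ satisfies \eqref{eqM} by transporting the PDE \eqref{eqN} through the conformal diffeomorphism $f$, using the conformal covariance of the conformal Laplacian $L_g := -\Delta_g + \frac{1}{4}\frac{N-2}{N-1}R_g$. The key identity I would invoke is the classical transformation law: if $\tilde g = \varphi^{2^*-2} g = \varphi^{4/(N-2)} g$ is a conformal change of metric, then for any smooth $\phi$,
\[
L_{\tilde g}\phi = \varphi^{-\frac{N+2}{N-2}}\, L_g(\varphi \phi).
\]
Since $f$ is an isometry from $(M,\tilde g)$ to $(N,h)$, we have $L_h v = (L_{\tilde g}(v\circ f))\circ f^{-1}$, equivalently $L_h v \circ f = L_{\tilde g}(v \circ f)$. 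Combining these two facts gives $L_g(\varphi\,(v\circ f)) = \varphi^{\frac{N+2}{N-2}}\,(L_h v)\circ f = \varphi^{\frac{N+2}{N-2}}\,(L_{\tilde g}(v\circ f))$, i.e. $L_g u = \varphi^{\frac{N+2}{N-2}}\,(L_h v)\circ f$.

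Next I would plug in that $v$ solves \eqref{eqN}. Evaluating \eqref{eqN} at the point $y = f(x)$ yields
\[
(L_h v)(f(x)) = \big[(\varphi\circ f^{-1})(f(x))\big]^{-\frac{N+2}{N-2}} F\big(f^{-1}(f(x)),\, (\varphi\circ f^{-1})(f(x))\, v(f(x))\big) = \varphi(x)^{-\frac{N+2}{N-2}} F\big(x,\, \varphi(x) v(f(x))\big).
\]
Recognizing $\varphi(x) v(f(x)) = u(x)$, this is exactly $(L_h v)\circ f = \varphi^{-\frac{N+2}{N-2}} F(\cdot, u)$. Substituting into the identity from the previous paragraph gives $L_g u = \varphi^{\frac{N+2}{N-2}} \cdot \varphi^{-\frac{N+2}{N-2}} F(\cdot,u) = F(\cdot, u)$, which is precisely \eqref{eqM}.

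For the $L^{2^*}$ norm identity, I would use that $dV_{\tilde g} = \varphi^{\frac{2N}{N-2}} dV_g = \varphi^{2^*} dV_g$ and that $f$ pulls back $dV_h$ to $dV_{\tilde g}$, so $\int_N |v|^{2^*} dV_h = \int_M |v\circ f|^{2^*}\, dV_{\tilde g} = \int_M |v\circ f|^{2^*} \varphi^{2^*}\, dV_g = \int_M |u|^{2^*}\, dV_g$. The only genuine obstacle is establishing the conformal covariance law for $L_g$ with the correct powers of $\varphi$; this is standard (it is the defining property of the Yamabe operator), so I would either cite it or include the short computation tracking how $\Delta_g$, $R_g$, and the volume form transform under $g \mapsto \varphi^{4/(N-2)} g$. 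Everything else is bookkeeping with the chain rule and the isometry $f$.
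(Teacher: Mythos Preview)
Your proposal is correct. In fact the paper does not give a proof of this lemma at all: it is stated as a fact recalled from \cite{D86}, and a few lines later the paper simply records the transformation law $L_{\widetilde g}\,\cdot=\mu^{-\frac{N+2}{N-2}}L_g(\mu\,\cdot)$ for the conformal Laplacian and remarks that the correspondence of the equations can then be checked directly. Your argument is exactly that direct check, carried out in full: you use the conformal covariance of $L_g$ together with the observation that $f$ is an isometry from $(M,\varphi^{4/(N-2)}g)$ to $(N,h)$, and you handle the $L^{2^*}$ identity via the volume form transformation $dV_{\tilde g}=\varphi^{2^*}dV_g$. So your approach is not merely consistent with the paper's, it supplies the details the paper leaves to the reader.
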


We consider the inverse of the stereographic
projection $\pi:\sphere{N}\setminus\{p\}\rightarrow\R^N$. We denote
it by $\Phi=\pi^{-1}:\R^N\rightarrow\sphere{N}\setminus\{p\}$,
moreover $g_0$ will denote the standard metric on $\sphere{N}$ and
$\delta$ the standard one on $\R^N$.

The diffeomorphism $\Phi$ is conformal between the two manifolds,
since it results
$$ g\doteq\Phi^*g_0=\mu(x)^{\frac{4}{N-2}}\delta\ ,$$
where
$$\mu(x)=\left(\frac{2}{1+\abs{x}^2}\right)^{\frac{N-2}{2}}.$$

In addition, we point out the manifold $(\R^N,g)$ is \emph{the same}
as $(\sphere{N},g_0)$, in terms of diffeomorphic manifolds.

\bigskip


We recall the following
\begin{defn}
We define the \emph{conformal Laplacian} on a differentiable closed
manifold $(M,g)$ of dimension $N$ the operator
$$ L_g=-\Delta_g+\frac{N-2}{4(N-1)}R_g $$
where $\Delta_g$ denotes the standard Laplace-Beltrami operator on
$M$ and $R_g$ the scalar curvature of the manifold.
\end{defn}
Moreover, this operator has a simple transformation law under
a conformal change of metric, that is
$$ \textrm{if \quad $\widetilde{g}=\mu(x)^{\frac{4}{N-2}}g$\qquad
then\quad
$L_{\widetilde{g}}\,\cdot=\mu(x)^{-\frac{N+2}{N-2}}L_g\big(\mu(x)\cdot\big)$.}$$
In our case we are dealing with the same manifold $\R^N$ endowed
with the two metrics $\delta$, the standard one, and $g=\Phi^*g_0$.
Thus in our case we have
$$ L_\delta=-\Delta \qquad  L_g=-\Delta_g+\frac{1}{4}N(N-2) $$
so it is quite easy to check directly the correspondence between
the equations stated in Lemma \ref{lemma equazioni} by calculations.

\subsection{Proof of the optimality of Theorem \ref{doublyradialth} with respect to the Morse index}

In this section we discuss the optimality of Theorems \ref{doublyradialthmsphere} with respect to the solutions' Morse index.
First of all, we consider the the equation on the sphere $\sphere{N}$ related to \eqref{eqdr} through the weighted composition with
the stereographic projection $\pi$ as conformal
diffeomorphism from $\sphere{N}\setminus\{p\}$ onto $\R^N$: it is immediate to check that it is
$$ -\Delta_{\sphere{N}}v(y)+\frac{1}{4}N(N-2)v(y)=f(v(y)) \qquad y\in\sphere{N}.$$
In his paper \cite{D86}, Ding states the following result:
\begin{lem}\label{ding2}
 There exists a sequence $\{v_k\}$ of biradial solutions to the equation
\begin{equation}\label{eq:sphere}
 -\Delta_{\sphere{N}}v+\frac{1}{4}N(N-2)v=\abs{v}^{\frac{4}{N-2}}v \qquad v\in C^2(\sphere{N}) 
\end{equation}
such that $\int_{\sphere{N}}\abs{v_k}^{\frac{2N}{N-2}}dV\rightarrow\infty$ as $k\rightarrow\infty$.
\end{lem}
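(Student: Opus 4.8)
The statement asserts the existence of infinitely many biradial solutions to the critical Lane--Emden equation on $\sphere{N}$ with unbounded energy, so the natural approach is a variational one: exploit the extra compactness afforded by the biradial symmetry. The plan is to work with the functional
\[
J(v)=\frac12\int_{\sphere{N}}\left(\abs{\nabla_{\sphere{N}}v}^2+\tfrac14 N(N-2)v^2\right)dV-\frac{N-2}{2N}\int_{\sphere{N}}\abs{v}^{\frac{2N}{N-2}}dV
\]
restricted to the closed subspace $H$ of $H^1(\sphere{N})$ consisting of biradial functions, i.e.\ functions depending only on $(|\xi|,|\zeta|)$ under the splitting $\R^{N+1}=\R^{k+1}\times\R^{N-k}$ (equivalently, $O(k+1)\times O(N-k)$-invariant functions), with both factor dimensions at least $2$. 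The first key step is a \emph{compact embedding}: by the principle of symmetric criticality it suffices to find critical points of $J|_H$, and the point is that on $H$ the embedding $H\hookrightarrow L^{2N/(N-2)}$ is in fact compact, because the biradial symmetry kills the non-compact directions (translations/concentration toward a single point are incompatible with invariance under a group whose orbits through generic points are positive-dimensional spheres). Concretely one would reduce to a weighted one-dimensional problem on the quarter-circle parametrized by an angle $\phi$ with $|\xi|=\cos\phi$, $|\zeta|=\sin\phi$, with weights $\cos^{k}\phi\,\sin^{N-k-1}\phi$, and observe that the critical Sobolev exponent for this reduced problem is subcritical relative to the one-dimensional ambient, yielding compactness.

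With compactness in hand, the second step is to produce infinitely many critical points with diverging energy by an \emph{equivariant minimax} argument: since $J|_H$ is even, one invokes the symmetric (Clark / Ambrosetti--Rabinowitz) version of the mountain pass theorem, defining
\[
c_j=\inf_{\gamma(K)\ge j}\ \sup_{v\in K} J(v)
\]
over symmetric sets $K$ of Krasnoselskii genus at least $j$. The Palais--Smale condition for $J|_H$ follows from the compact embedding, $J|_H$ has the required linking geometry near $0$ (the quadratic part is positive definite since $\tfrac14 N(N-2)>0$, so $0$ is a strict local minimum), and the space $H$ is infinite-dimensional, so each $c_j$ is a positive critical value and $c_j\to\infty$. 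Distinctness of infinitely many of the corresponding solutions, together with $c_j=J(v_j)\to\infty$, forces $\int_{\sphere{N}}\abs{v_j}^{2N/(N-2)}dV\to\infty$, since on a critical point $J(v)=\tfrac1N\int\abs{v}^{2N/(N-2)}dV$. Finally, elliptic regularity upgrades the weak solutions to $C^2(\sphere{N})$, matching the statement.

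The main obstacle is the compactness step: one must check carefully that biradiality genuinely restores compactness of the critical embedding, i.e.\ that a biradial Palais--Smale sequence cannot lose mass by concentrating. This is where the hypotheses $k+1\ge 2$ and $N-k\ge 2$ (equivalently $k,N-k\ge 1$ with the sphere splitting giving genuine $\ge2$-dimensional factors) and $N\ge 3$ are used: the invariance group acts with orbits of positive dimension through every point, so any blow-up profile would have to be invariant under a nontrivial group of rotations fixing the blow-up point, which is impossible for the standard Aubin--Talenti bubble — hence no concentration and the embedding is compact. An alternative to re-proving this from scratch is simply to cite Ding's paper \cite{D86} for the construction and energy bound, and to limit the present write-up to recording the reduction (via Lemma~\ref{lemma equazioni} and the stereographic conformal change) that transports these solutions to the form stated here; in the context of this note that citation-based route is the intended one, and the only thing that needs checking is the conformal correspondence of the equations, which was already verified above.
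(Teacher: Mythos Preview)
Your proposal is correct and matches the paper's treatment: Lemma~\ref{ding2} is not proved in the paper but attributed to Ding \cite{D86}, with the one-line explanation that the compact embedding of biradial $H^1$-functions into $L^{2N/(N-2)}$ restores compactness and allows the Ambrosetti--Rabinowitz symmetric Mountain Pass Theorem to produce the sequence. Your outline fleshes out precisely this argument (and correctly identifies the citation route as the intended one), so there is nothing to add.
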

The choice of working in a space of biradial is motivated by the compact embedding of the space of $H^1$--biradial functions on the sphere into $L^{2N/(N-2)}$. In this way one can overcome the lack of compactness due to the presence of the critical exponent and prove the result as an application of the Ambrosetti-Rabinowitz symmetric Mountain Pass Theorem. 
We are interested in classifying the solutions according to their Morse index. We can state the following
\begin{lem}\label{ding3}
 Among the solutions $\{v_k\}$ in Lemma \ref{ding2} there is also a constant one, which is unique and corresponds to the minimum of Sobolev quotient. All the other biradial solutions have biradial Morse index at least 2, and there is at least one non constant biradial solution having Morse index exactly 2.
\end{lem}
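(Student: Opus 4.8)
The plan is to prove the three assertions of the statement in turn.

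\textit{The constant solution.} A nonzero constant $v\equiv\bar c$ solves \eqref{eq:sphere} precisely when $\bar c^{\,4/(N-2)}=\tfrac{1}{4}N(N-2)$, which determines $\bar c$ up to sign, the two solutions $\pm\bar c$ being identified since the equation is odd. To see that $\bar c$ is the unique biradial minimizer of the Sobolev quotient $Q(v)=\bigl(\int_{\sphere{N}}|\nabla v|^2+\tfrac{1}{4}N(N-2)v^2\bigr)\,/\,\bigl(\int_{\sphere{N}}|v|^{2^*}\bigr)^{2/2^*}$, I would recall that on the round sphere $Q$ is the Yamabe functional, whose extremals --- by the classical results of Aubin, Talenti and Obata --- are, up to a positive constant, the conformal factors $\mu_\xi(y)=\bigl(\sqrt{1-|\xi|^2}\,/\,(1-\xi\cdot y)\bigr)^{(N-2)/2}$, with $\xi$ ranging over the open unit ball of $\R^{N+1}$. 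Invariance of $\mu_\xi$ under $SO(k)\times SO(N+1-k)$ forces $\xi$ to be fixed by the whole group, hence $\xi=0$ because $k,N+1-k\geq2$, so $\mu_\xi$ is constant. Therefore the infimum of $Q$ over biradial functions equals the absolute one and is attained only at $\pm\bar c$. Since Ding's construction runs the $\Z_2$-symmetric Mountain Pass entirely inside the space of biradial functions and its first minimax level $c_1$ is the biradial ground state energy, the solution produced at level $c_1$ is $\pm\bar c$; this yields the first assertion. For later use I would also record that the equation linearized at $\bar c$ is $-\Delta_{\sphere{N}}w-Nw=0$ (using $f'(\bar c)=\tfrac{N+2}{N-2}\bar c^{\,4/(N-2)}=\tfrac{1}{4}N(N+2)$), whose spectrum $\{\,j(j+N-1)-N:j\geq0\,\}$ is negative only for $j=0$; since the degree-one spherical harmonics are linear and none of them is biradial, the biradial Morse index of $\bar c$ is exactly $1$ and its biradial nullity is $0$.

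\textit{Every other biradial solution has biradial Morse index at least $2$.} This is the contrapositive of Theorem \ref{doublyradialthmsphere}; one only checks that $f(v)=|v|^{4/(N-2)}v\in\mathcal C^1(\R;\R)$, which holds because $f'(v)=\tfrac{N+2}{N-2}|v|^{4/(N-2)}$ is continuous on all of $\R$, the origin included. Hence every biradial $C^2$ solution with biradial Morse index $\leq1$ is constant, so a non-constant biradial solution has biradial Morse index $\geq2$.

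\textit{A non-constant biradial solution with biradial Morse index exactly $2$.} I would take it at the second level $c_2$ of the same $\Z_2$-symmetric minimax. First, $c_2>c_1$: otherwise the standard multiplicity theorem for even $\mathcal C^1$ functionals satisfying the Palais--Smale condition would produce infinitely many critical points at level $c_1$, contradicting the fact that the only biradial critical points at that level are $\pm\bar c$. Next, the classical Morse-index bound for symmetric minimax values --- at the second level, the analogue of ``a mountain-pass point has Morse index $\leq1$'' --- yields a critical point $v_2$ at level $c_2$ with biradial Morse index $\leq2$ (the minimax being performed in the biradial space). Since $c_2>c_1$ while every constant solution sits at level $c_1$ or is trivial, $v_2$ is non-constant, so by the previous step its biradial Morse index is $\geq2$ as well, hence equal to $2$. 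When $k=N+1-k$ (e.g. $N=3$) a more explicit alternative would be to minimize $Q$ over the biradial functions that are odd under the swap of the two radial blocks and to count the negative directions of the linearization by splitting biradial functions into this odd part and its even complement.

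\textit{Main difficulty.} The sensitive point is the upper bound ``biradial Morse index of $v_2\leq2$'': minimax only controls the augmented Morse index (index plus nullity) at a possibly degenerate critical point, and one must also exclude that a biradial solution of smaller Morse index occupies level $c_2$. Both are settled by the strict inequality $c_2>c_1$ together with the complete description of the critical set at level $c_1$ (namely $\{\pm\bar c\}$, with biradial nullity $0$) obtained in the first step, which is precisely what forces the index to be exactly $2$.
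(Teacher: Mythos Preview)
Your treatment of the first two assertions follows the same path as the paper: the constant solution is identified with the Talenti/Aubin bubble via stereographic projection, hence minimizes the Sobolev quotient and has biradial Morse index one; and Theorem~\ref{doublyradialthmsphere} then forces every non-constant biradial solution to have biradial Morse index at least two. Your explicit computation of the linearization at $\bar c$ and of its biradial nullity is not in the paper but is a welcome addition for your purposes.

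For the existence of a non-constant biradial solution with Morse index exactly two you take a genuinely different route. The paper does \emph{not} use the second symmetric minimax level; instead it invokes the uniqueness of positive solutions to the critical equation on $\R^N$ (so that every biradial positive solution on the sphere is constant) and then cites Bartsch--Wang \cite{BW96} on Morse theory in ordered Banach spaces to produce a biradial \emph{sign-changing} solution of biradial Morse index at most two. Being sign-changing, this solution is non-constant, and Theorem~\ref{doublyradialthmsphere} closes the argument exactly as in your proof. Your approach via the second Ljusternik--Schnirelmann level $c_2$ together with an abstract Morse-index estimate for symmetric minimax values (in the spirit of Lazer--Solimini, Tanaka, or Ghoussoub) is a legitimate alternative: it bypasses the positivity/sign-changing machinery but imports a different black box, and it requires you to check that the symmetric mountain-pass scheme is set up so that such index bounds actually apply in the biradial subspace.

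Your ``Main difficulty'' paragraph slightly overcomplicates the resolution. Once you have (i) a critical point at level $c_2$ with \emph{augmented} biradial Morse index at most two, and (ii) that this point is non-constant (because $c_2>c_1$ and the only biradial critical points at level $c_1$ are $\pm\bar c$), Theorem~\ref{doublyradialthmsphere} gives biradial Morse index at least two directly; since the augmented index dominates the index, you get index $=2$ and nullity $=0$ in one stroke. The description of the critical set at $c_1$ and its nullity is needed only to establish (ii), not to bridge the gap between index and augmented index.
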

\begin{proof}
 We can check directly there exists a unique constant solution:
$$ \frac{1}{4}N(N-2)c=c^{\frac{N+2}{N-2}} \quad \Longrightarrow \quad c=\left(\frac{1}{4}N(N-2)\right)^{\frac{N-2}{4}}\,.$$
which corresponds to the Talenti functions on the sphere (\cite{Talenti76}). We mean it is the image of the function $w(x)=\frac{\big(N(N-2)\big)^{\frac{N-2}{4}}}{\big(1+\abs{x}^2\big)^{\frac{N-2}{2}}}=\mu(x)c$ through the diffeomorphism $\pi^{-1}$ and
$$L_g c=\mu(x)^{-\frac{N+2}{N-2}}\Delta\big(\mu(x)c\big)\,.$$
Then it reaches the minimum of Sobolev quotient
$\inf_{v\neq0}\frac{\int_{\sphere{N}\abs{\nabla
v}^2}}{\left(\int_{\sphere{N}}\abs{v}^{\frac{2N}{N-2}}\right)^{2/2^*}}$,
and therefore it is quite simple to prove it is the mountain pass
solution, i.e. its (plain, radial, biradial) Morse index is $m(c)=1$. Now, thanks to Theorem \ref{doublyradialthmsphere}, every other biradial solution having biradial Morse index at most 1 is constant, hence all the other solutions have biradial Morse index at least 2. 
Now, it is well known that Talenti's solutions are unique among positive solutions of equation \eqref{eqdr} on $\R^N$, so  we can assert that the only biradial positive solutions of  \eqref{eq:sphere} are constant. On the other hand, it can be proven for example using Morse Theory in ordered Banach spaces (see \cite{BW96}), that the equation admits a biradial sign-changing solution having biradial Morse index at most 2. Hence there is a biradial solution of \eqref{eq:sphere} with Morse index exactly 2 which is not constant. \end{proof}

\end{document}